\newtheorem{theorem}{Theorem}
\newtheorem{definition}{Definition}
\newtheorem{proposition}{Proposition}
\newtheorem{corollary}{Corollary}
\newtheorem{lemma}{Lemma}
\newtheorem{remark}{Remark}
\newtheorem{algorithm}{Algorithm}
\title{The Generalized Locker Problem }
\author{Keneth Adrian P. Dagal\\
\emph{kendee2012@gmail.com}\\
Department of Mathematics\\
Far Eastern University\\
Manila, Philippines}
\date{}
\begin{document}

\maketitle

\begin{abstract}
The Locker Problem is frequently used in introducing some topics in elementary number theory like divisors and multiples. It appears in many curricula ranging from elementary, secondary and up to tertiary level. In this paper, I will provide the structure of the problem and algorithms in solving some modified problems.
\end{abstract}		

\section{Introduction}
\noindent The original locker problem stated:

\emph{There are 100 students and 100 lockers in a certain school, each student has a certain locker number. The students and the lockers are uniquely numbered from 1 to 100. The students had a certain game. Initially, the lockers are all closed. The first student opened all the lockers. The second student closed every second locker.The third student opened or closed every third locker.This method goes until the 100th student. After all the students finish opening and closing the lockers, how many lockers are left opened ? and what are the lockers which are left opened ?
}

The locker problem varies in terms of the number of lockers and students. But the number of lockers and students is assumed to be equal.In most cases, the number is 100, but there are cases wherein the number is 20 or 1000. The answer is known that those lockers whose number is a perfect square will be left opened.

In this paper, we consider the possibility that the number of students and the number of lockers is unequal. We also consider the possibility of students repeating turns and the possibility that some students will not participate in the activity.But we preserve the rule of the problem which is every $i$th student will change the state of all lockers numbered $j$ where $i\mid j$. In addition to this, I will provide algorithms in finding the corresponding open or close lockers given a subset of students and vice versa.

\section{Some Observations}

We have to note that: \emph{Every locker number $j$ is assigned to a particular student $i$ where $i=j$}.

\begin{lemma}\label{thm:le1}
Let $A$ be the set containing all students, $B$ be the set containing all the lockers. Then, the number of elements of $A$ is equal to the number of elements of $B$, $|A| =|B|$.
\end{lemma}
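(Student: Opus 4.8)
The plan is to exhibit an explicit bijection between $A$ and $B$ and then invoke the fact that two finite sets have equal cardinality precisely when such a bijection exists. The natural candidate is dictated by the observation preceding the lemma: since every locker number $j$ is assigned to the student $i$ with $i=j$, I would define $f\colon A\to B$ by sending the student numbered $i$ to the locker numbered $i$.

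First I would check that $f$ is well defined: the quoted observation guarantees that for each student $i$ there is indeed a locker bearing the number $i$, so $f(i)\in B$ for every $i\in A$. Next I would verify injectivity, which is immediate — if $f(i_1)=f(i_2)$ then the lockers numbered $i_1$ and $i_2$ coincide, and since the lockers are uniquely numbered this forces $i_1=i_2$. For surjectivity I would run the correspondence in reverse: given any locker $j\in B$, the student numbered $j$ exists (students and lockers are each numbered consecutively starting from $1$), and $f(j)=j$, so every locker is in the image.

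Having established that $f$ is a bijection, the conclusion $|A|=|B|$ follows at once from the definition of cardinality for finite sets. The only point demanding any care — and the closest thing to an obstacle — is spelling out the bookkeeping that the two numbering schemes are both "$1,2,\dots,n$" with no gaps and no repetitions, since that is exactly what makes the assignment $i\mapsto i$ simultaneously total, injective, and onto; once that is made explicit, the remainder of the argument is routine.
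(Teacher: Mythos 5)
There is a genuine gap here: your argument assumes exactly what the lemma is supposed to establish. The paper's setting (see the introduction) explicitly allows the number of students and the number of lockers to differ a priori, so the map $i \mapsto i$ from students to lockers is not automatically well defined --- if $|A| > |B|$ there are students with no locker bearing their number, and your appeal to the preceding observation does not help, since that observation runs in the other direction (every locker $j$ has an assigned student $j$, not every student an assigned locker). The ``bookkeeping'' you defer to the end, namely that both sets are numbered $1,2,\dots,n$ for the \emph{same} $n$, is precisely the content of the lemma, so invoking it is circular.

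The paper's proof is different in substance. For the case $|A| < |B|$ it uses the assignment observation together with the pigeonhole principle to derive a contradiction, so that case cannot occur under the stated assumption. For the case $|A| > |B|$ it does not produce a bijection at all; instead it argues that every surplus student, numbered $k+a$ with $k$ the largest locker number and $a \geq 1$, has smallest multiple $k+a > k$ and therefore touches no locker, so these students can be discarded without changing the outcome of the game. That divisibility argument --- the reason the extra students are immaterial --- is the real mathematical content of the lemma, and it is absent from your proposal. If you want to salvage the bijection approach, you would first need to reproduce that argument to cut $A$ down to the students numbered at most $k$, at which point the identity map does the rest.
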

\begin{proof}
Suppose $|A| \neq |B|$. If $|A| < |B|$, then there exists lockers that are not assigned to students or assigned but to students who already had assignments, by pigeonhole, which contradicts our assumption. Now, if $|A| > |B|$ , then there exists students that have no assigned locker number whose numbers are greater than the largest locker number. But in the problem rule, these students will not have any effect on the outcome of the state of the lockers since every student numbered $k+a$, $a, k \in \mathbb{N}$,where $k$ is the largest locker number and $a$ be the difference of the student number and  the largest locker number,does not have a multiple from $1$ to $k$ since the least possible multiple of $k+a$ is $k+a$ which is greater than $k$. Thus, $|A| =|B|$.
\end{proof}

The  lemma \ref{thm:le1} shows that for every locker uniquely assigned to every student,any additional student is immaterial.
\begin{lemma}
Given $A$ and $B$.
\begin{enumerate}
	\item Let $S$ be the set of $r$-element subset of $A$ where $r = 0,1,2,..., n$ and $ |A| = n $. Then $ |S| = 2^n. $
	\item Let $L$ be the set of binary strings of 0 and 1 denoted by $\{s_j\}^n$ such that every $j$th position of the string denote the state of the $j$th locker defined as $s_j=\left\{\begin{array}{ccc}0 & \mathrm{if} & j \textnormal{th locker is close} \\1 & \mathrm{if} & j \textnormal{th locker is open} \end{array}\right.$ Then $ |L| = 2^n. $
\end{enumerate}
\end{lemma}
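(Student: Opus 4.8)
The plan is to treat both parts as standard counting facts and, where possible, to unify them with a single bijection. For part (1), note that $S$ is nothing but the full power set of $A$, since it collects the $r$-element subsets for every $r$ from $0$ up to $n=|A|$. I would count the power set by the multiplication principle: fixing an enumeration $A=\{a_1,\dots,a_n\}$, a subset is determined by $n$ independent binary decisions ($a_k$ is either in or out), so $|S| = 2\cdot 2\cdots 2 = 2^n$. As an alternative (and as a sanity check) I would mention the identity $\sum_{r=0}^{n}\binom{n}{r}=2^n$, which is the binomial theorem $(1+1)^n$ applied to the count $|S|=\sum_{r=0}^n \binom{n}{r}$ obtained by grouping subsets by their size.

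For part (2), the set $L$ consists of all length-$n$ strings over $\{0,1\}$. The same multiplication-principle argument applies verbatim: each of the $n$ positions $s_j$ is chosen independently from the two-element set $\{0,1\}$, so $|L| = 2^n$.

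Finally, I would point out the obvious bijection $\varphi\colon S \to L$ sending a subset $T \subseteq A$ (with $A$ identified with $\{1,\dots,n\}$) to its characteristic string, i.e.\ the string whose $j$th entry is $1$ exactly when $j\in T$. This map is clearly well defined, injective, and surjective, so $|S|=|L|$, giving a second derivation of part (2) from part (1) and tying the two statements together in the way the paper will later exploit (subsets of students $\leftrightarrow$ configurations of lockers).

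There is no real obstacle here; the only thing to be careful about is the bookkeeping in part (1) — making explicit that ``$r=0,1,\dots,n$'' means $S$ is the entire power set rather than the subsets of a single fixed size — and, if the binomial-theorem route is used, stating clearly that $|S|=\sum_{r=0}^n\binom{n}{r}$ before invoking $(1+1)^n=2^n$.
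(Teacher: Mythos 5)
Your proposal is correct and takes essentially the same approach as the paper: the paper proves part (1) via $\sum_{i=0}^n \binom{n}{i} = 2^n$ (which you include as your sanity check) and part (2) via the multiplication principle $\underbrace{2\cdot 2\cdots 2}_{n} = 2^n$, exactly as you do. Your added explicit characteristic-string bijection between $S$ and $L$ is a harmless bonus that anticipates the map $f$ the paper introduces afterward.
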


\begin{proof}
The first claim is straightforward since $\sum_{i=0}^n{n\choose i} = 2^n$. The second claim is also straightforward since $\underbrace{2\cdot2\cdots2\cdot2\cdot2}_{n}=2^n.$
\end{proof}

Every $u \in S$ has a corresponding $v \in L$.We will show that the correspondence is \emph{one-to-one} and that follows bijection since we know that $|S| =|L|$. We define $f: S \rightarrow L $ where every subset of $A$ has a corresponding result of states of the lockers. As an example, if $a_i$ is the only student who changes the state of the lockers, then the corresponding string will be all 0's except those positions which are multiple of  $i$ considering that the initial string is consists of all 0's. For now, we consider initial states of the locker to be all closed which implies all $s_j$'s are 0's. But later on, we generalize it to any random combination of state of lockers.

\begin{theorem}
Suppose $u_1$ is a 1-element subset of $A$ and $u_1 \in S$. Then there is no other $u \in S$ such that $u_1$ and $u$ have the same image $v \in L$.
\end{theorem}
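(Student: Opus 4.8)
The plan is to translate the toggling dynamics into linear algebra over $\mathbb{F}_2$ and to exploit a triangularity that comes from the elementary fact that the least positive multiple of $k$ is $k$ itself. Identify each subset $u\subseteq A$ with its characteristic vector in $\mathbb{F}_2^n$, and for each student $k$ let $c_k\in\mathbb{F}_2^n$ be the vector whose $j$th coordinate is $1$ precisely when $k\mid j$; this $c_k$ is exactly the string produced when $a_k$ alone acts on the all-closed configuration. Since distinct students change the state of a given locker independently and a toggle is addition mod $2$, the map $f:S\to L$ defined above satisfies $f(u)=\sum_{k\in u}c_k$, the sum taken coordinatewise in $\mathbb{F}_2$. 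Thus the theorem is the assertion that $f(u_1)=f(u)$ with $u_1$ a singleton forces $u=u_1$.

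First I would record the structural observation: for each $k$, the smallest index $j$ with $(c_k)_j=1$ is $j=k$. Equivalently, the $n\times n$ matrix $M$ with $M_{jk}=1$ iff $k\mid j$ is lower triangular with all diagonal entries $1$, hence invertible over $\mathbb{F}_2$; so $f$ is injective (indeed bijective, consistent with $|S|=|L|=2^n$ from the previous lemma). This already yields the theorem, but since only the singleton case is claimed I would also give the self-contained argument below, which is the one the later general case will reuse.

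Write $u_1=\{a_i\}$, so $v=c_i$ has $v_j=0$ for $j<i$ and $v_i=1$. Suppose $u\in S$ has $f(u)=v$. Scanning lockers $j=1,2,\dots,n$ in increasing order: the final state of locker $j$ is the parity of $|\{k\in u:k\mid j\}|$, and every such $k$ other than $j$ satisfies $k\le j/2<j$; so once membership of $1,\dots,j-1$ in $u$ has been pinned down, membership of $j$ is forced by the value $v_j$. Running this induction from $j=1$ to $j=n$ shows $u$ is uniquely determined, and since $u_1$ itself realizes $v$ we conclude $u=u_1$; the ``extra large-indexed student'' scenario $u\supsetneq u_1$ is eliminated at the smallest index of the symmetric difference $u\,\triangle\,u_1$, where exactly one new $c_k$ contributes a $1$ and hence changes that coordinate. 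The only mild obstacle is bookkeeping this induction cleanly — in particular being explicit that all divisors of $j$ strictly below $j$ have already been decided — but there is no real difficulty, since the triangular structure of $M$ does all the work.
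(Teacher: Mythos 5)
Your proof is correct, and the engine driving it is the same one the paper relies on: the least locker touched by student $k$ is locker $k$ itself, so scanning lockers in increasing order pins down membership of each student one at a time. Where you genuinely depart is in scope and packaging. The paper's argument is tailored to the singleton case --- it first rules out $a_k\in u$ for $k<i$, then treats indices strictly between consecutive multiples of $i$, then the multiples themselves, and along the way leans on the shaky step that $u\neq u_1$ ``implies'' $u_1\subset u$. Your version runs the induction uniformly over all $j$ (every proper divisor $k$ of $j$ satisfies $k\le j/2<j$, so once membership of $1,\dots,j-1$ in $u$ is decided, $v_j$ forces whether $a_j\in u$), and therefore proves that $f$ is injective on all of $S$, not merely that nothing collides with a singleton. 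That stronger conclusion is exactly what the paper needs next anyway (its Corollary 1), so your single argument subsumes both results. The reformulation as invertibility of the lower unitriangular divisibility matrix over $\mathbb{F}_2$ makes the whole thing visible at a glance and meshes naturally with the paper's later identification of $L$ with $\mathbb{Z}_2^n$ and of $f$ as a group isomorphism; the only thing the paper's case-by-case route buys in exchange is that it stays entirely in the explicit locker-toggling language, at the cost of an argument that is harder to verify.
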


\begin{proof}
Considering the initial string consisting of all 0's. If $u_1 = \{ a_i\}$ , then all $s_j$'s of string $v$ are $1$ where $i\mid j$. This implies that all $j$'s less than $i$ are 0's. We look for another $u \in S$ such that $u \neq u_1$ and the image of $u$ is also $v$.  If the image of $u$ is $v$, then all $j$'s less than $i$ are 0's. For this, there is no $a_k \in A$ for all $k < i $ where $a_k \in u$.  Suppose $a_1 \in u$, then $s_1 = 1 $ which contradicts our assumption that $s_1 = 0$. Inductively, suppose $a_k \in u$. Since every $a_j \notin u$ where $j = 1,2, 3 , ..., k-1$, then that assures that $s_j = 1$ which contradicts our assumption that $s_j = 0$. Now since $u \neq u_1$ and it follows that $u_1 \subset u$, we consider every $s_j$, where $ ci < j < (c+1)i\,\, \forall\,\, c = 1,2,3,...,\left\lfloor \frac{n}{i}\right\rfloor-1$.All $s_j = 0$. Every $j$th locker will only be touched once by every $a_j$ student, then it follows that $a_j \notin u$. We only left for $j = (c+1)i$.Similar reasoning as above, we show that  $u = u_1$.
\end{proof}

\begin{corollary}
Every $u \in S$ is uniquely assigned to $v \in L$.
\end{corollary}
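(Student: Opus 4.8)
The statement to prove is that the map $f\colon S\to L$ is injective, so that each $u\in S$ determines one and only one $v\in L$; and since the cardinality computation already gives $|S|=|L|=2^n$, injectivity will upgrade $f$ to a bijection, which is presumably the intended reading of ``uniquely assigned.'' The plan is to run the same inductive idea as in the preceding Theorem, but applied to an arbitrary pair of subsets of $A$ rather than to a singleton and an arbitrary set.

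First I would pin down the convention already adopted in the paper, that the initial string is all $0$'s, so that for any $u\in S$ the $j$th entry of $f(u)$ equals the parity of $|\{\,a_d\in u : d\mid j\,\}|$, i.e.\ the parity of the number of students in $u$ that touch locker $j$. (If one later allows an arbitrary initial string, the same argument survives verbatim, since a different initial state merely flips a fixed pattern of bits and does not affect injectivity.) This reduction to a parity count is the conceptual heart of the argument.

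Next I would argue by contradiction: suppose $u\neq u'$ are elements of $S$ with $f(u)=f(u')=v$. Since $u$ and $u'$ are distinct subsets of the finite set $A=\{a_1,\dots,a_n\}$, the symmetric difference $u\triangle u'$ is nonempty and finite, hence contains a least index $i$; by minimality, for every $d<i$ we have $a_d\in u$ if and only if $a_d\in u'$. Now every divisor $d$ of $i$ satisfies $d\le i$, and all such $d$ with $d<i$ contribute identically to the toggle count at locker $i$ in the two processes, while $a_i$ lies in exactly one of $u,u'$. Hence the toggle counts at locker $i$ differ by exactly $1$, so they have opposite parities, and the $i$th entry of $f(u)$ differs from that of $f(u')$ — contradicting $f(u)=f(u')$. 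Therefore $u=u'$, $f$ is injective, and by $|S|=|L|=2^n$ it is a bijection.

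I do not expect a serious obstacle here; the only points needing care are (a) the observation that a locker's final state depends on $u$ only through the parity of its toggle count, so that the divisors $d<i$ genuinely cancel, and (b) the legitimacy of choosing a least index in $u\triangle u'$, which holds because $A$ is finite and $u\neq u'$. The argument is essentially the Theorem's induction stripped of its assumption that one set is a singleton; equivalently, one can phrase it as strong induction on the locker index $i$, assuming $u$ and $u'$ agree at all indices below $i$ and forcing agreement at $i$.
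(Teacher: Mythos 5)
Your proof is correct. The paper's own proof of this corollary is a two-line reduction: it picks the least $a_i$ lying in $u_1$ but not in $u_2$ and asserts that Theorem 1 then forces $a_i \in u_2$, a contradiction. You localize at the same place --- the least index $i$ of the symmetric difference $u \triangle u'$ --- but then finish directly: every divisor $d$ of $i$ with $d < i$ satisfies $a_d \in u \Leftrightarrow a_d \in u'$ by minimality, while $a_i$ lies in exactly one of the two sets, so the toggle counts at locker $i$ differ by exactly one, have opposite parities, and hence give different $i$th bits, contradicting $f(u)=f(u')$. This is more than a cosmetic difference: Theorem 1 as stated only treats the case where one of the two preimages is a singleton, so the paper's appeal to it glosses over precisely the step you supply, namely that the final state of locker $j$ depends on $u$ only through the parity of $|\{a_d \in u : d \mid j\}|$. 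Your version is self-contained, does not depend on Theorem 1 at all (in fact it subsumes it), and combined with $|S|=|L|=2^n$ immediately upgrades injectivity to the bijectivity the paper wants.
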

\begin{proof}
Now suppose there are two $u_1,u_2 \in S$ where the image of $u_1$ and $u_2$ is $v$. WLOG, we let $min\{a_i\} \in u_1$ such that $a_i \notin u_2$. But by Theorem 1, $a_i \in u_2$. 
\end{proof}

By the lemma 2 and corollary 1, we have shown that $f$ \emph{is a bijective map} where $f$ is defined as for every $a_i \in u$ and $u \in S$ will change the state of all lockers numbered $j$ ( the $j$th position in $v \in L$) where $i\mid j$.

\section{The Structure of the Locker Problem}

We consider first the sequence of students who will change the state of the lockers. We consider repeating turns of students too.We have take into consideration the possibility that there are students who will not participate in the game. 

\begin{theorem}
Let $S$ be the set of $r$-element subset of $A$ where $r = 0,1,2,..., n$ and $ |A| = n $. "$\rightarrow$ " operation is defined to be "next to change the state of the locker". Then,  $\left\langle S, \rightarrow \right\rangle$ is an \emph{abelian group}.
\end{theorem}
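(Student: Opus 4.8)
The plan is to recognize the operation $\rightarrow$ as the symmetric difference of subsets (equivalently, bitwise XOR on the strings of $L$) and then transport the group structure across the bijection $f\colon S\to L$. First I would make precise what $u_1\rightarrow u_2$ means: starting from the all-zero string, let the students in $u_1$ act (in any order) and then let the students in $u_2$ act; the resulting string is some $v\in L$, and since $f$ is a bijection (Theorem 1 and Corollary 1), this $v$ is the image of a unique subset of $A$, which we declare to be $u_1\rightarrow u_2$. Because $f$ is onto $L$, this subset indeed lies in $S$, so $\rightarrow$ is a well-defined binary operation on $S$; this already gives closure.

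Next I would analyze the composite action on $L$. Each student $a_i$ acts by toggling exactly the positions $j$ with $i\mid j$; this is an involution, and the toggles of distinct students commute, since position $j$ is flipped once for each divisor $i\le n$ of $j$ among the acting students and addition mod $2$ is order-independent. Writing $d_1,d_2,d_{12}$ for the number of divisors of $j$ lying in $u_1\setminus u_2$, $u_2\setminus u_1$, $u_1\cap u_2$ respectively, the parity of the total number of flips at position $j$ after the $u_1$-phase and the $u_2$-phase is $(d_1+d_{12})+(d_2+d_{12})\equiv d_1+d_2 \pmod 2$, which is exactly the parity of the number of divisors of $j$ in $u_1\triangle u_2$. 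Hence $f(u_1\rightarrow u_2)=f(u_1)\oplus f(u_2)$ and therefore $u_1\rightarrow u_2=u_1\triangle u_2$, the symmetric difference.

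From here the axioms are immediate, whether verified directly for $\triangle$ or transported along $f$ from the known group $(L,\oplus)\cong(\mathbb{Z}/2\mathbb{Z})^n$: associativity of $\triangle$; the empty subset $\varnothing$ (no student acting, the all-zero string) as identity; each $u$ its own inverse since $u\triangle u=\varnothing$; and commutativity since $u_1\triangle u_2=u_2\triangle u_1$. Thus $\langle S,\rightarrow\rangle$ is an abelian group, in fact an elementary abelian $2$-group of order $2^n$.

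The main obstacle — essentially the only nontrivial point — is the second step: showing the composite ``$u_1$ then $u_2$'' depends only on $u_1\triangle u_2$, with no order-dependence or hidden cancellation issues. This rests on two facts that I would state carefully: that each student's action is an involution of the state space $L$, and that these involutions pairwise commute because toggling a locker is coordinatewise addition in $\mathbb{Z}/2\mathbb{Z}$. Once that is established, well-definedness of $\rightarrow$ as a map into $S$ (via the bijectivity of $f$) and all the group axioms follow with no further computation.
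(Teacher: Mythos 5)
Your proof is correct, but it is organized quite differently from the paper's. The paper verifies each group axiom directly by informal reasoning about toggling: commutativity because the change to a fixed locker $b_j$ is the same whichever student goes first, self-inverse because a student acting twice flips each of his lockers twice, identity $\emptyset$, and closure asserted because $S$ is a power set (with the identification $u_1\rightarrow u_2=u_1\oplus u_2$ deferred to the Remark that follows the theorem). You instead prove one key lemma up front --- that the composite action of ``$u_1$ then $u_2$'' toggles position $j$ a number of times congruent mod $2$ to the number of divisors of $j$ in $u_1\triangle u_2$, so that $u_1\rightarrow u_2=u_1\triangle u_2$ via the injectivity of $f$ --- and then all the axioms are inherited from $(\wp(A),\triangle)$ or from $(\mathbb{Z}/2\mathbb{Z})^n$. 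This buys you a genuinely rigorous closure argument (the paper's ``clear since $S$ is a power set of $A$'' does not by itself show that $\rightarrow$ lands in $S$), and it makes well-definedness of the operation explicit, which the paper never addresses. The cost is that you front-load a computation that the paper essentially postpones to its Theorem 3 (the isomorphism $f(u\rightarrow u')=f(u)+f(u')$); in effect you prove a version of that later theorem as your main lemma. Both arguments are sound; yours is tighter and makes the dependence on the bijectivity of $f$ explicit, while the paper's is more elementary locker-by-locker reasoning.
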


\begin{proof}
Let $a_i$ and $a_j$ be distinct students. $a_i \rightarrow a_j = a_j \rightarrow a_i $. Let a particular $b_j \in B$. The change for $b_j$ is the same whoever comes first. And thus, WLOG,we can let every $a_i \in u$ to be in ascending order.Similarly, associativity follows. Considering $v= \{0\}^n$ as the initial state of the lockers where the pre-image of $v$ is $\emptyset$, then for every $u_i \in S$, $u_i \rightarrow \emptyset = \emptyset \rightarrow u_i$. It is easy to see that $ u_i \rightarrow u_i =(u_i)^2 = \emptyset$. Since every student $a_i$ changes the state of the locker twice, that means for a particular locker $b_j$, if it is not touched by $a_i$, it is clear. But if it is touched twice,then 0 becomes 1, 1 becomes 0 or 1 becomes 0, and 0 becomes 1. And lastly, for every $u_1, u_2 \in S$, $u_1 \rightarrow u_2 \in S$. This is clear since $S$ is a power set of $A$.
\end{proof}

\begin{remark}
Suppose $u_1 \cap u_2 = \emptyset$, then clearly $u_1 \rightarrow u_2 \in S$. Now,suppose $u_1 \cap u_2 \neq \emptyset$, then $u_1 \rightarrow u_2 \in S$ since for all $ a_i \in u_1 \cup u_2 $ appears twice in the sequence which will make all these $a_i$'s not affect the state since $(\{a_i\})^2= \emptyset$.And thus, $u_1 \oplus u_2 \in S$. It should be noted that the structure of $\left\langle S, \rightarrow \right\rangle$ is the same as the structure of $\left\langle \wp (N), \oplus  \right\rangle$, where $\wp(N)$ is the power set of $N$ and $\oplus$ is the symmetric difference operation.
\end{remark}

It can be seen that $L = \mathbb{Z}_2^n$ where $n$ means the number of $\mathbb{Z}_2$ in a row. $\left\langle L, +  \right\rangle$ is also an abelian group.The + is defined as position- wise addition in the string and knowing that  $\left\langle \mathbb{Z}_2, +  \right\rangle$ is an abelian group where + is defined to be addition modulo 2, everything is barely straightforward. Another immediate consequence of the theorem is that if a certain $a_i$ repeat his turn $k$ times, then if $k$ is even, $a_i$ does not affect the state of the lockers,otherwise he does, but same effect when he did not repeat his turn.

\begin{theorem}
 Given $f: S \rightarrow  L$. Then,  $f$ is an isomorphism map.
\end{theorem}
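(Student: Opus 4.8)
The plan is to leverage the two facts already in hand: that $f$ is a bijection (Corollary 1, together with the count $|S| = |L| = 2^n$), and that both $\langle S, \rightarrow\rangle$ and $\langle L, +\rangle$ are abelian groups. What remains is to verify that $f$ respects the two operations, i.e., that $f(u_1 \rightarrow u_2) = f(u_1) + f(u_2)$ for all $u_1, u_2 \in S$; once this is established, $f$ is a bijective homomorphism and hence an isomorphism.

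First I would pin down the value of $f$ on an arbitrary subset in terms of a toggle count. Starting from the all-zero string, letting the students of $u = \{a_{i_1}, \dots, a_{i_r}\}$ act in turn flips the $j$th coordinate once for every $i_\ell \in u$ with $i_\ell \mid j$; since addition in $\mathbb{Z}_2$ is precisely ``flip on odd count'', this yields the closed form $(f(u))_j = \sum_{i \in u,\ i \mid j} 1$ with the sum taken in $\mathbb{Z}_2$. By the commutativity and the relation $(\{a_i\})^2 = \emptyset$ recorded in Theorem 2, this is independent of the order of the moves and of any repetitions, so $f$ is well defined.

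Next I would compute $f(u_1 \rightarrow u_2)$ coordinatewise. Performing the sequence $u_1$ followed by $u_2$ toggles locker $j$ once for each divisor-student of $j$ in $u_1$ and then once for each divisor-student of $j$ in $u_2$, so $(f(u_1 \rightarrow u_2))_j$ is the reduction mod $2$ of $\#\{i \in u_1 : i \mid j\} + \#\{i \in u_2 : i \mid j\}$. Since reduction mod $2$ is additive, this splits as $(f(u_1 \rightarrow u_2))_j = (f(u_1))_j + (f(u_2))_j$ in $\mathbb{Z}_2$; and because $+$ on $L$ is coordinatewise addition mod $2$, we obtain $f(u_1 \rightarrow u_2) = f(u_1) + f(u_2)$. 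Together with bijectivity, this makes $f$ an isomorphism.

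I expect the only real subtlety — more bookkeeping than genuine obstacle — to be that first step: making precise that the state of locker $j$ after a whole sequence of students depends only on the parity of the number of acting students dividing $j$, unaffected by order or repetition. This is exactly the content of the relations $u_i \rightarrow u_j = u_j \rightarrow u_i$ and $(u_i)^2 = \emptyset$ from Theorem 2, so the homomorphism property is essentially a restatement of those relations at the level of strings; no new combinatorial input is needed. One could also sidestep the toggle count by verifying $f(\{a_i\} \rightarrow u) = f(\{a_i\}) + f(u)$ for singletons and inducting on $|u_1|$, using that every element of $S$ is a $\rightarrow$-product of singletons.
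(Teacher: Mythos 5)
Your proposal is correct and follows essentially the same route as the paper: take bijectivity as already established, then verify $f(u_1 \rightarrow u_2) = f(u_1) + f(u_2)$ coordinatewise by observing that the state of locker $j$ is determined by the parity of the number of participating students $a_i$ with $i \mid j$, so the counts from $u_1$ and $u_2$ add mod $2$. Your extra care about well-definedness (order- and repetition-independence via the relations of Theorem 2) is a welcome tightening of a step the paper leaves implicit, but it is not a different argument.
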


\begin{proof}
We know that $f$ is a bijective map. This implies that every $u \in S$ has a unique image $v \in L$ and every $v \in L$ is an image. If $ f(u) = v $ and $ f(u') = v' $, then $ f(u \rightarrow u') = f(u) + f(u') = v + v'$.This is true since for a particular locker $b_j$ with a state $s_j$, if for every $a_i \in u \oplus u'$ touch the locker, then that consists the number of $i$'s such that $i\mid j$. In fact, if the number of divisors in $u$ and $u'$ is of different parity, then  $s_j = 1$, otherwise $s_j=0$. With this,$ f(u \rightarrow u') = f(u) + f(u') = v + v'$. Thus, this proves that $f$ is an isomorphism map.
\end{proof}

We take into consideration $\emptyset \rightarrow \{0\}^n$, that is all lockers are initially closed as our initial state. Actually, suppose initially it is a random combination of open or close lockers, we will show that it can still be modified to the original consideration as $\emptyset \rightarrow \{0\}^n$ to make the problem simpler. 

\begin{theorem}
Suppose in general the initial $v\in L$ is not necessarily $v=\{0\}^n$. Then, $f = v + g$ and $f + v =g$, where $f:(\emptyset \rightarrow S) \rightarrow L$, $g:(u \rightarrow S) \rightarrow L$ for all $u \in S$, and $f(u)= v$.
\end{theorem}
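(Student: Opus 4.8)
The plan is to derive both displayed identities at once from Theorem~3 together with the fact that $L=\mathbb{Z}_2^{\,n}$ has characteristic $2$. I read the statement as follows: $f(w)$ records the locker string obtained by letting the students in $w\in S$ act on the all-closed state $\{0\}^n$, while $g(w)$ records the string obtained by letting them act on the arbitrary initial state $v$. Since $f$ is onto (Corollary~1), we may fix $u\in S$ with $f(u)=v$, so that $v$ is literally the state reached from $\{0\}^n$ by performing the moves in $u$. The first step is the elementary observation that a student $a_i$ toggles exactly the lockers whose number is a multiple of $i$, \emph{regardless of their current state}; hence for any $w\in S$ the net effect of the moves in $w$ on a string $x\in L$ is to add to $x$ a fixed vector depending only on $w$, and by definition that vector is $f(w)$, because $\{0\}^n$ is the identity of $L$.

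Consequently, ``start in state $v$, then perform $w$'' yields the same string as ``start in $\{0\}^n$, perform $u$, then perform $w$'', the point being that $v$ was chosen to be the state reached from $\{0\}^n$ by $u$, and subsequent moves simply compose. By Theorem~2 the composition of the move $u$ and the move $w$ is the move $u\rightarrow w$, so this string equals $f(u\rightarrow w)$. Therefore $g(w)=f(u\rightarrow w)$ for every $w\in S$.

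Now I would invoke Theorem~3: $f$ is a group isomorphism, hence additive, so $f(u\rightarrow w)=f(u)+f(w)=v+f(w)$. Combining with the previous step gives $g(w)=v+f(w)$ for all $w$, i.e.\ $g=v+f$ as maps into $L$. Adding $v$ to both sides and using $v+v=\{0\}^n$ in $\mathbb{Z}_2^{\,n}$ gives $f=v+g$; and since $L$ is abelian, $v+g=g+v$, so the two claims $f=v+g$ and $f+v=g$ are in fact a single assertion. That completes the argument.

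The only point that genuinely needs care — and really the only content beyond Theorem~3 — is the claim in the first paragraph that a student-subset acts on $L$ by a translation independent of the current configuration, so that composing the move $u$ with the move $w$ and then applying additivity of $f$ is legitimate. This is immediate from the rule that the $j$th locker is touched by $a_i$ precisely when $i\mid j$ (a condition on the indices, not on the locker states), but it is the hinge on which the reduction to the all-closed case turns, so I would state it explicitly rather than leave it implicit.
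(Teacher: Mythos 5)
Your argument is correct and follows essentially the same route as the paper: identify $g(w)$ with $f(u\rightarrow w)$, apply the additivity of $f$ from Theorem~3 to get $g=v+f$, and then use $v+v=\{0\}^n$ to obtain both identities. The only difference is that you explicitly justify why acting on the initial state $v$ coincides with prepending the move $u$ to an action on $\{0\}^n$ (because each student toggles lockers by index divisibility, independently of their current state) --- a point the paper treats as definitional via the notation $g:(u\rightarrow S)\rightarrow L$, and which is worth spelling out as you do.
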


\begin{proof}
 Since $f$ is an isomorphism map , then every $v \in L$ is an image of a unique $u \in S$ and also
\begin{eqnarray*}
v+g & = & f(u)+f(u \rightarrow S )\\
& = & f(u) +f(u) + f(S)\\
& = & f(S)\\
& = & f(\emptyset \rightarrow S)\\
& = & f. 
\end{eqnarray*}
Now since $f= v+g$ and $v+v = \{0\}^n$, then  $f +v= g+ v + v = g $.
\end{proof}

\begin{remark}
Take note that  $\left\langle L, +  \right\rangle$ is also an abelian group. Do not be confused the operation "$\rightarrow$" on S with the mapping notation "$\rightarrow$".
\end{remark}

\section{Some Modified Problems}

 For the remaining part, we now look for $u \in S$ such that we want to have $v \in L$ or vice versa. In particular,if we send all students, then the perfect square locker numbered will remain open.We first state some well-known theorems.

\begin{theorem}
The number of divisors of $n$, denoted by $d(n)$, is given by the formula $$d(n) = \prod_{i = 1 }^ k ( \alpha_i + 1 )$$ where  $n = \prod_{i = 1 }^ k {p_i}^{\alpha_i}$, $p_i $ is the $i$th prime number and $\alpha_i$  is the number of $p_i$ factors of $n$.
\end{theorem}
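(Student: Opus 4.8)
The plan is to invoke the Fundamental Theorem of Arithmetic to set up a bijection between the set of positive divisors of $n$ and the set of exponent tuples $(\beta_1,\dots,\beta_k)$ satisfying $0\le\beta_i\le\alpha_i$, and then to count the latter set by the multiplication principle. This is the cleanest route because it separates the number-theoretic content (which primes and powers can occur in a divisor) from the purely combinatorial content (counting independent choices).

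First I would characterize the divisors: I claim every positive divisor $d$ of $n$ has the form $d=\prod_{i=1}^k p_i^{\beta_i}$ with $0\le\beta_i\le\alpha_i$, and conversely every such product divides $n$. For the forward direction, write $n=dm$ with $m$ a positive integer; by unique factorization the primes appearing in $d$ and in $m$ are among those appearing in $n$, namely the $p_i$, and for each $i$ the exponent of $p_i$ in $d$ together with its exponent in $m$ must sum to $\alpha_i$, which forces $0\le\beta_i\le\alpha_i$. For the converse, if $0\le\beta_i\le\alpha_i$ then $n/d=\prod_{i=1}^k p_i^{\alpha_i-\beta_i}$ is a positive integer, so $d\mid n$. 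Uniqueness of factorization makes the assignment $d\mapsto(\beta_1,\dots,\beta_k)$ well defined and injective, and the converse direction makes it surjective onto $\{(\beta_1,\dots,\beta_k):0\le\beta_i\le\alpha_i\}$, hence a bijection.

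Next I would count: in a tuple $(\beta_1,\dots,\beta_k)$ the coordinate $\beta_i$ may be chosen independently from the $\alpha_i+1$ values $\{0,1,\dots,\alpha_i\}$, so by the multiplication principle the number of such tuples is $\prod_{i=1}^k(\alpha_i+1)$, and by the bijection this equals $d(n)$. As an alternative I could instead induct on $k$: the base case $k=1$ lists the divisors $1,p_1,\dots,p_1^{\alpha_1}$ directly, and the inductive step uses that $\gcd\!\left(p_k^{\alpha_k},\prod_{i<k}p_i^{\alpha_i}\right)=1$ to write each divisor of $n$ uniquely as a product of a divisor of $p_k^{\alpha_k}$ and a divisor of $\prod_{i<k}p_i^{\alpha_i}$, giving $d(n)=(\alpha_k+1)\cdot\prod_{i=1}^{k-1}(\alpha_i+1)$.

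The main obstacle is the rigorous justification of the divisor characterization — specifically that a divisor of $n$ can neither introduce a prime not dividing $n$ nor carry an exponent of $p_i$ exceeding $\alpha_i$ — since this is precisely the point where the Fundamental Theorem of Arithmetic must be applied with care rather than taken for granted. Once that characterization is established, the remaining step is a routine application of the multiplication principle and needs no further comment.
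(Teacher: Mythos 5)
Your proof is correct and complete: the bijection between divisors of $n$ and exponent tuples $(\beta_1,\dots,\beta_k)$ with $0\le\beta_i\le\alpha_i$, justified via unique factorization and followed by the multiplication principle, is the standard argument for this formula. For comparison, the paper states this theorem as a well-known fact and supplies no proof at all, so there is nothing to contrast with; your writeup simply fills in the omitted (and entirely routine) details, and both the bijective count and your alternative induction on $k$ would serve equally well.
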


\begin{corollary}
If $n$ is a perfect square, then $d(n)$ is odd, otherwise $d(n)$ is even.
\end{corollary}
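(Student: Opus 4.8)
The plan is to derive Corollary 2 directly from Theorem 5 (the divisor-counting formula) by examining the parity of the product $d(n) = \prod_{i=1}^k(\alpha_i + 1)$. First I would recall that a positive integer $n = \prod_{i=1}^k p_i^{\alpha_i}$ is a perfect square if and only if every exponent $\alpha_i$ is even; this is the standard characterization of squares via prime factorization, and it follows from uniqueness of factorization since $n = m^2$ forces each exponent in $n$ to be twice the corresponding exponent in $m$.

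Next I would analyze the product $\prod_{i=1}^k(\alpha_i + 1)$ factor by factor. A product of integers is odd exactly when every factor is odd, and is even exactly when at least one factor is even. So $d(n)$ is odd if and only if $\alpha_i + 1$ is odd for all $i$, which happens if and only if $\alpha_i$ is even for all $i$ — precisely the condition that $n$ is a perfect square. Conversely, if $n$ is not a perfect square, then some $\alpha_j$ is odd, so the factor $\alpha_j + 1$ is even, and hence the whole product $d(n)$ is even. I should also note the trivial boundary case $n = 1$ (empty product, $d(1) = 1$ odd, and $1$ is a perfect square) so the statement is not vacuously wrong at the base.

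There is no serious obstacle here: the entire argument is a one-line parity observation once the prime-factorization characterization of squares is in hand. The only thing worth being careful about is stating clearly why "every $\alpha_i$ even" is equivalent to "$n$ is a perfect square" rather than merely asserting it, and making sure both directions of the biconditional in the corollary ("if $n$ is a perfect square then $d(n)$ is odd" and "otherwise $d(n)$ is even") are addressed. I would present it as a short chain: $n$ a perfect square $\iff$ all $\alpha_i$ even $\iff$ all $(\alpha_i+1)$ odd $\iff$ $d(n)$ odd, and then take the contrapositive for the "otherwise" clause.
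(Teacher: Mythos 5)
Your proof is correct and is exactly the argument the paper intends: the corollary is stated without proof as an immediate consequence of Theorem 5, and your parity analysis of the product $\prod_{i=1}^k(\alpha_i+1)$, together with the characterization of perfect squares as those $n$ with all exponents $\alpha_i$ even, is the standard derivation. Your care with the equivalence and the $n=1$ base case only makes the argument more complete than what the paper supplies.
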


\begin{theorem}
Given $f: A \rightarrow v$. Then, $v = \{s_j\}^n $ where $s_j=\left\{\begin{array}{ccc}0 & \mathrm{if} & j \neq p^2 \\ 1 & \mathrm{if} & j=p^2 \end{array}\right.$ for some natural numbers $p$.
\end{theorem}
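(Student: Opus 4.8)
The plan is to apply the map $f$ to the full set $A$ of students, which corresponds to the case where every student takes a turn, starting from the all-closed state $v=\{0\}^n$. By the bijection/isomorphism results established earlier, $f(A)$ is a well-defined string $v=\{s_j\}^n$, and the value of the $j$th coordinate $s_j$ records the parity of the number of times locker $j$ is touched. Since student $a_i$ touches locker $j$ exactly when $i\mid j$, and every divisor $i$ of $j$ with $1\le i\le n$ corresponds to a student $a_i\in A$, the number of times locker $j$ is toggled is precisely $d(j)$, the number of divisors of $j$.

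First I would make the observation above precise: for each $j\in\{1,\dots,n\}$, the set of students who change the state of locker $j$ is $\{a_i : i\mid j\}$, which has cardinality $d(j)$; since the locker starts closed (state $0$), after all toggles its state is $d(j)\bmod 2$. Next I would invoke Corollary 2 (the parity-of-divisors fact): $d(j)$ is odd precisely when $j$ is a perfect square, and even otherwise. Combining these two facts gives $s_j=1$ iff $j=p^2$ for some natural number $p$, and $s_j=0$ otherwise, which is exactly the claimed form of $v$.

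One point worth spelling out is why it suffices to range the toggling students over $i=1,\dots,n$ rather than over all positive integers dividing $j$: this is guaranteed by Lemma~\ref{thm:le1}, since $|A|=|B|=n$ and every divisor of $j\le n$ is itself at most $n$, hence is the index of an actual student in $A$. Thus no divisor of $j$ is "missing" from the count, and the toggle count is the true $d(j)$.

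I expect the main obstacle to be purely expository rather than mathematical: the heavy lifting has already been done in Theorem~5 (the divisor formula) and Corollary~2, and in the isomorphism theorem that legitimizes talking about $f(A)$ at all. The only care needed is to state cleanly that ``starting from all-closed, parity of toggles = final state'' and to connect ``$j$ is a perfect square'' with ``$j=p^2$ for some natural number $p$''—these are definitionally the same, so the proof reduces to assembling the cited results.
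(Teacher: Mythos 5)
Your proposal is correct and follows the same route as the paper, which simply cites Theorem 5 and Corollary 2 and calls the result an immediate consequence; you have merely filled in the details (locker $j$ is toggled $d(j)$ times, so its final state from the all-closed start is $d(j)\bmod 2$, which is $1$ exactly when $j$ is a perfect square). No gaps.
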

\begin{proof}
This is an immediate consequence of the Theorem 5 and Corollary 2.
\end{proof}

\begin{proposition}
Given $f: u \rightarrow v$ and $u$ is a 1-element subset of $A$.Then, $v = \{s_j\}^n $ where $s_j=\left\{\begin{array}{ccc}0 & \mathrm{if} & i\nmid j \\1 & \mathrm{if} & i \mid j \end{array}\right.$
\end{proposition}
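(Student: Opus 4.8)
The plan is to unwind the definition of the map $f$ and apply it directly to a singleton subset. Recall that $f$ is defined so that each student $a_i$ belonging to a subset $u$ changes the state of precisely those lockers numbered $j$ with $i \mid j$, starting from the all-zero initial string $\{0\}^n$. When $u = \{a_i\}$ is a $1$-element subset, only the student $a_i$ acts, and he acts exactly once on each locker he touches; hence each locker numbered $j$ with $i \mid j$ is toggled exactly once (from $0$ to $1$) and every other locker is never touched (remaining $0$).

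First I would state that, since $|u| = 1$, write $u = \{a_i\}$ for the unique $i$ with $a_i \in u$. Then I would invoke the definition of $f$ given just before Theorem~1: the image $v = f(u)$ is obtained from the initial string $\{0\}^n$ by flipping position $j$ once for each member of $u$ dividing $j$. Since $u$ has a single member $a_i$, position $j$ is flipped once if $i \mid j$ and zero times if $i \nmid j$. Starting from $0$, a single flip yields $1$ and zero flips leave $0$, which is exactly the claimed description of $s_j$.

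Alternatively, and perhaps more cleanly, I would derive this as a special case of Theorem~6 (the isomorphism $f$): for a singleton $u = \{a_i\}$, the number of divisors of $j$ lying in $u$ is $1$ when $i \mid j$ and $0$ otherwise, so the parity of that count is odd exactly when $i \mid j$, giving $s_j = 1$ iff $i \mid j$. Either route is essentially immediate.

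The only genuine subtlety — and it is minor — is the implicit quantifier on $i$: the statement writes $s_j$ in terms of ``$i$'' without binding it, so I would make explicit that $i$ denotes the index of the unique element $a_i \in u$. Beyond fixing that notational point, there is no real obstacle; the proposition is a direct corollary of the construction of $f$ together with the base case already analyzed in the proof of Theorem~1, so the proof will be one or two sentences.
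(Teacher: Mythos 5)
Your proposal is correct and takes essentially the same route as the paper, which simply says ``by the rule of the game and considering $\{0\}^n$ as the initial state''; you merely spell out the single-toggle argument (and the parity alternative) in more detail. Your remark about binding the index $i$ to the unique element of $u$ is a fair notational clarification but does not change the substance.
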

\begin{proof}
By the rule of the game and considering $\{0\}^n$ as the initial state.
\end{proof}

\begin{theorem}
Given $f: u \rightarrow v$ and $u$ is a (n-1)-element subset of $A$. Then, $v = \{s_j\}^n $ where $s_j=\left\{\begin{array}{ccc}0 & \mathrm{if} & (j\neq p^2 \wedge i\nmid j) \vee (j = p^2 \wedge i\mid j)  \\1 & \mathrm{if} &  (j\neq p^2 \wedge i\mid j) \vee (j = p^2 \wedge i\nmid j) \end{array}\right.$ for some natural numbers $p$.
\end{theorem}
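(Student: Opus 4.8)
The plan is to reduce the claim to two cases that have already been settled, the full student set $A$ (Theorem 6) and a single student (Proposition 1), using the group structure of Theorem 2 and the isomorphism $f$ of Theorem 3. First I would note that an $(n-1)$-element subset $u$ of $A$ is the complement $A\setminus\{a_i\}$ of exactly one student $a_i$, and since $\{a_i\}\subseteq A$ this complement is the symmetric difference $A\oplus\{a_i\}$; by the Remark following Theorem 2, under the identification of $\langle S,\rightarrow\rangle$ with $\langle\wp(N),\oplus\rangle$ this is the same as $A\rightarrow\{a_i\}$.

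Applying the isomorphism $f$ then gives $f(u)=f(A\rightarrow\{a_i\})=f(A)+f(\{a_i\})$, where $+$ is position-wise addition modulo $2$ in $L=\mathbb{Z}_2^n$. By Theorem 6 the string $f(A)$ carries a $1$ exactly in the positions $j$ that are perfect squares; by Proposition 1 the string $f(\{a_i\})$ carries a $1$ exactly in the positions $j$ with $i\mid j$. Adding these two strings coordinate-wise, the value $s_j$ is obtained from a four-way split: $1+1=0$ when $j=p^2$ and $i\mid j$; $1+0=1$ when $j=p^2$ and $i\nmid j$; $0+1=1$ when $j\neq p^2$ and $i\mid j$; and $0+0=0$ otherwise. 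Grouping the cases that give $0$ against those that give $1$ reproduces exactly the piecewise formula in the statement.

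As an independent check one can argue directly, without the group machinery: the students of $u=A\setminus\{a_i\}$ that ever touch locker $j$ are the $a_k$ with $k\mid j$ and $k\neq i$, so their number is $d(j)$ when $i\nmid j$ and $d(j)-1$ when $i\mid j$, and locker $j$ ends open iff this count is odd. Since Corollary 2 says $d(j)$ is odd precisely when $j$ is a perfect square, the same four-way case analysis resurfaces. I expect the only real obstacle here to be notational rather than mathematical: one must make explicit that an $(n-1)$-element subset is the complement of a single student, fix the letter $i$ for that student, and then carefully line up the parities in the case split with the (rather dense) logical expression written for $s_j$. Once Theorem 6, Proposition 1, and the isomorphism of Theorem 3 are invoked, there is nothing deeper to do.
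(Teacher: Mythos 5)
Your proposal is correct and follows essentially the same route as the paper's own proof: writing $u = A \oplus \{a_i\} = A \rightarrow \{a_i\}$, applying the isomorphism to get $f(A) + f(\{a_i\})$, and combining Theorem 6 with Proposition 1 via the four-case parity addition. The direct divisor-counting check you append is a nice sanity test but does not change the argument.
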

\begin{proof}
 $u = A - \{a_i\} = A \oplus \{a_i\} = A \rightarrow  \{a_i\}$.
\begin{eqnarray*}
v & = & f(u)\\
& = & f(A \rightarrow  \{a_i\})\\
& = & f(A) + f(\{a_i\}). 
\end{eqnarray*}
By Theorem 6 and Proposition 1, we are left with four cases: $ 0+0 =1+1= 0$ and $0+1 =1+0 = 1$.These cases are the  $s_j$'s for $f(A) , f(\{a_i\})$ respectively.
\end{proof}

The following definitions are redefined in this paper but taken in the paper of \cite{Torrence}. The refinement is done to make the notations suit the notations in previous Sections.

\begin{definition}
Let $n \in \mathbb{N}$ and $n = \prod_{i = 1 }^ k  {p_i}^{\alpha_i}$, $p_i $ is the $i$th prime number and $\alpha_i$  is the number of $p_i$ factors of $n$. The signature of $n$, denoted by $\varsigma (n)$, is the set of all $\alpha_i$'s in $n$. In other words, $$\varsigma(n) = \{ \alpha_i \mid i = 1,2,3,...k \}$$
\end{definition}

\begin{definition}
Let $T \subseteq \mathbb{N}$. $\sigma(T) = \{n \in \mathbb{N} \mid \varsigma(n) \subseteq T \}$
\end{definition}

Suppose $u = \sigma(T)$ where $T = mk , m = 2, 3, 4, ...$ for some $k \in \mathbb{N}$. It is noted that  if $\sigma(\{1\})$ is the set of squarefree numbers.

\begin{definition}
Let $T \subseteq \mathbb{N}$. $\upsilon(T) = \{n \in \mathbb{N} \mid n \equiv 0, 1, 2,..., (m-1) \,\,\textnormal{mod}\,\,\, (2m )\} $
\end{definition}

\begin{theorem}
Let $T \subseteq \mathbb{N}$. Given $f: u \rightarrow v$.If $u = \sigma(T)$, then $v = \{s_j\}^n$ where $s_j=\left\{\begin{array}{ccc}0 & \mathrm{if} & j \notin \sigma(\upsilon(T) ) \\1 & \mathrm{if} & j \in \sigma(\upsilon(T) )\end{array}\right.$ If $v=\sigma(T)$, then all $a_i \in u$ where $i \in \sigma(\upsilon^{-1}(T))$ must march.
\end{theorem}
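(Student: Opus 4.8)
The plan is to reduce the statement to a parity count of divisors, using the group isomorphism $f: S \to L$ of Theorem 3. Reading $\sigma(T)$, as a set of participating students, to mean $\{a_i : i \in \sigma(T),\ 1 \le i \le n\}$, we have that $u = \sigma(T)$ is the $\rightarrow$-product of the singletons $\{a_i\}$ with $i \in \sigma(T)$ (equivalently their union, since distinct singletons are disjoint). Applying the isomorphism $f$ of Theorem 3, and recalling that the operation on $L$ is position-wise addition modulo $2$, the $j$th coordinate of $v = f(u)$ equals $\sum_{i \in \sigma(T),\ i \mid j} 1 \pmod 2$ by Proposition 1. Since $i \mid j \le n$ forces $i \le n$, this is precisely the parity of $N_T(j) := |\{\, d : d \mid j,\ \varsigma(d) \subseteq T \,\}|$. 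Hence the first assertion is equivalent to the statement that $N_T(j)$ is odd if and only if $j \in \sigma(\upsilon(T))$; the second assertion will then follow by inverting $\upsilon$.

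The key step is to evaluate $N_T(j)$ by multiplicativity. Writing $j = \prod_{i=1}^{k} p_i^{\alpha_i}$, every divisor of $j$ has the form $d = \prod_i p_i^{\beta_i}$ with $0 \le \beta_i \le \alpha_i$, and $\varsigma(d) \subseteq T$ says exactly that each nonzero $\beta_i$ lies in $T$. As the exponents $\beta_i$ may be chosen independently across the primes, $N_T(j) = \prod_{i=1}^{k} \left( 1 + |T \cap \{1, 2, \dots, \alpha_i\}| \right)$, the summand $1$ recording the choice $\beta_i = 0$. A product of positive integers is odd exactly when every factor is odd, so $N_T(j)$ is odd if and only if $|T \cap \{1, \dots, \alpha_i\}|$ is even for every $i$ --- that is, if and only if every exponent of $j$ belongs to the set $E(T)$ of those $\alpha \in \mathbb{N}$ for which $|T \cap \{1, \dots, \alpha\}|$ is even.

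It remains to identify $E(T)$ with $\upsilon(T)$. In the situation of interest $T$ is the set of multiples of a fixed $m \ge 2$, so $|T \cap \{1, \dots, \alpha\}| = \lfloor \alpha / m \rfloor$, and a one-line computation shows that $\lfloor \alpha / m \rfloor$ is even exactly when $\alpha \bmod 2m \in \{0, 1, \dots, m-1\}$, which is the condition defining $\upsilon(T)$ in Definition 3. Therefore $N_T(j)$ is odd iff $\varsigma(j) \subseteq \upsilon(T)$ iff $j \in \sigma(\upsilon(T))$, which is the forward direction. For the converse, on sets of the form $m\mathbb{N}$ the operator $\upsilon$ is injective (distinct $m$ yield distinct unions of residue classes), so $\upsilon^{-1}(T)$ is well defined whenever $T$ has the prescribed shape; taking $u := \sigma(\upsilon^{-1}(T))$, the forward direction gives $f(u) = \{s_j\}^n$ with $s_j = 1$ iff $j \in \sigma\!\left(\upsilon(\upsilon^{-1}(T))\right) = \sigma(T)$, and this $u$ is the unique such set by Corollary 1, so exactly the students $a_i$ with $i \in \sigma(\upsilon^{-1}(T))$ must march.

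I expect the main obstacle to be bookkeeping rather than ideas: Definition 3 tacitly fixes $m$ from $T$ and is only meaningful when $T = m\mathbb{N}$, and one must keep straight the convention that ``$\sigma(T)$'' as a set of participants means $\sigma(T) \cap \{1, \dots, n\}$ (the infinitely many remaining indices divide no $j \le n$ and so are irrelevant). Once the reduction to $N_T(j)$ and its product formula are in place, the residue arithmetic identifying $\upsilon(T)$ and the inversion step are routine.
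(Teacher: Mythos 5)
Your proof is correct, but there is nothing in the paper to compare it against: the paper's entire ``proof'' of this theorem is the sentence ``The proof is in the paper of [Torrence],'' so yours is the only self-contained argument on the table. What you supply is the standard (and, as far as I can tell, the Torrence--Wagon) route: reduce via the isomorphism $f$ and Proposition 1 to the parity of $N_T(j) = |\{d : d \mid j,\ \varsigma(d) \subseteq T\}|$, factor this count multiplicatively as $\prod_i \bigl(1 + |T \cap \{1,\dots,\alpha_i\}|\bigr)$, and observe that oddness of the product forces every exponent of $j$ into the set of $\alpha$ for which $|T \cap \{1,\dots,\alpha\}|$ is even. Your version is in fact slightly more general than the stated theorem, since the product formula and the parity criterion work for arbitrary $T$, with $\upsilon(T)$ replaced by your set $E(T)$; the paper's $\upsilon$ only matches $E$ under the tacit assumption $T = m\mathbb{N}$, and you are right to flag that Definition 3 quantifies over all $T \subseteq \mathbb{N}$ while silently depending on an $m$ that only makes sense for sets of that shape. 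The converse direction via injectivity of $\upsilon$ on sets of the form $m\mathbb{N}$ together with the bijectivity of $f$ (Corollary 1) is also sound. The one piece of bookkeeping I would make explicit is the identification of the string $v$ with the set $\{j \le n : s_j = 1\}$ when the theorem writes ``$v = \sigma(T)$,'' but you have already noted the analogous truncation issue for $u$, and none of this affects the substance.
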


\begin{proof}
 The proof is in the paper of \cite{Torrence}.
\end{proof}

For other results, see the paper of \cite{Torrence}.

Suppose we add another rule, that is $a_i$ can change the state of the lockers if $a_{i-1}$ changed the state of the lockers. In other words,if $a_i$ is allowed to participate, it is a must that all $a_k$ where $k = 1,2,3, .., i-1$ must change the lockers first. Our concern now is to count the number of open lockers, denoted by $\omega$. Clearly, $ n- \omega$ will be the number of closed lockers. It is also clear that for the original problem, the number of open lockers will be $ \left\lfloor \sqrt{n}\right\rfloor $. We denote $ \left\lfloor \sqrt{n}\right\rfloor  = \theta(n)$.

\begin{theorem}
Suppose  $u = \{ a_i \mid i = 1, 2, 3, ..., p \} $ where $p > \frac{n}{2}$. Then $\omega = \theta(p) + (( n - p) - (\theta(n) - \theta (p)))$.
\end{theorem}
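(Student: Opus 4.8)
The plan is to analyze which lockers end up open when only the first $p$ students participate, splitting the lockers into the range $[1,p]$ and the range $[p+1,n]$, and then exploiting the hypothesis $p > n/2$ to pin down exactly how the second range behaves.

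First I would handle the lockers $j$ with $1 \le j \le p$. For such a locker, every divisor $i$ of $j$ satisfies $i \le j \le p$, so $a_i \in u$; hence locker $j$ is toggled exactly $d(j)$ times, just as in the original problem. By Corollary 2, such a locker is open precisely when $j$ is a perfect square, and the number of perfect squares in $[1,p]$ is $\lfloor\sqrt{p}\rfloor = \theta(p)$. This accounts for the first summand $\theta(p)$.

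Next I would handle the lockers $j$ with $p < j \le n$. The key observation, using $p > n/2$, is that any such $j$ satisfies $j > p \ge n/2 \ge j/2$, so $j$ has no divisor strictly between $j/2$ and $j$ other than... more precisely, the only divisor of $j$ that exceeds $p$ is $j$ itself (if $i \mid j$ and $i \ne j$ then $i \le j/2 < p$... here one checks $j/2 \le n/2 < p$). Therefore, among the participating students $a_1,\dots,a_p$, locker $j$ is toggled by exactly the divisors of $j$ that are $\le p$, i.e. by all divisors of $j$ except $j$ itself, so it is toggled $d(j) - 1$ times. Thus locker $j$ ($p < j \le n$) is open iff $d(j) - 1$ is odd iff $d(j)$ is even iff $j$ is \emph{not} a perfect square. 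The count of non-perfect-squares in the interval $(p, n]$ is $(n - p) - (\theta(n) - \theta(p))$, since there are $n - p$ integers in that interval and $\theta(n) - \theta(p)$ of them are perfect squares. This gives the second summand.

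Adding the two contributions yields $\omega = \theta(p) + \bigl((n-p) - (\theta(n) - \theta(p))\bigr)$, as claimed. The main obstacle — and the only place the hypothesis $p > n/2$ is really used — is the divisor argument in the second range: one must be careful to verify that for $p < j \le n$ the sole divisor of $j$ that fails to participate is $j$ itself, which rests on the inequality $j/2 \le n/2 < p$ for every proper divisor. Everything else is a straightforward parity count via Corollary 2.
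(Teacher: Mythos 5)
Your proposal is correct and follows essentially the same route as the paper: count the perfect squares in $[1,p]$, and use $2p > n$ to see that each locker $j$ in $(p,n]$ misses exactly one toggle (the one by $a_j$, since every proper divisor of $j$ is at most $j/2 \le n/2 < p$), so its final state is the opposite of the all-students outcome. Your version is in fact more explicit than the paper's about the key divisor inequality, but the decomposition and the parity argument are the same.
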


\begin{proof}
For all $ j \leq p $, the number of 1's in $v$ up to $p$th position is $\theta(p).$ For all $j$, $ p < j \leq n $, the number of 1's in $v$ from $p$th position to $n$th position is $(( n - p) - (\theta(n) - \theta (p))).$ This is true since for every $s_j$, $j > p$ , if $s_j = 0$ becomes $s_j = 1$ and vice versa. Since $p > \frac{n}{2} \Rightarrow  2p > n$, this follows that every $a_i \in u' ( u'$ is the complement of $u )$, where $i > p $ will not touch every corresponding lockers from $p$ to $n$ once. Thus, $\omega = \theta(p) + (( n - p) - (\theta(n) - \theta (p)))$. 
\end{proof}

\section{Algorithms for All Modified Problems}

The previous section provides some straightforward solutions to some particular cases. In this section, we will have algorithms in looking for $u \in S$ such that we want to have $v \in L$ or vice versa. This may not be that efficient unlike the solutions provided in the previous section but this is more powerful than them since it solves any $u \in S$, we can have the corresponding $v \in L$ and vice versa.

Recall that $\sigma(\{1\})$ is the set of squarefree numbers. And we let  set $k \cdot \sigma(\{1\}) =\{ k\cdot s | s \in \sigma(\{1\})\}$. We also let $\xi (P)$ be the set of $k$-element subset of P, where P is the set of primes.Note that $|\sigma(\{1\})|= |\xi (P)|$. In fact, we can have an injective set map $h: \sigma(\{1\}) \rightarrow \xi (P)$ where for each $x \in \sigma(\{1\}) $,  $h(x) = \{p_i \mid x=  \prod_{i = 1 }^ k {p_i}\}$. One trivial example is $h(1) = \emptyset$. This note is very important in the next theorem since for all nonprime elements of $\sigma(\{1\})$, it can be treated as subsets in $\xi (P)$ which are not 1-element subset in $\xi (P)$. Furthermore, the 1-element subset in $\xi (P)$ are the prime elements of $\sigma(\{1\})$.In addition to this, we can partition $\xi (P)$ into $$\bigcup_{z=0}^\infty \phi(z)$$ where $\phi(z)=\{ h(R) \mid R \subset \sigma(\{1\})\wedge |R| = z\}$.

\begin{theorem}
$u = \{ a_i \mid i \in k \cdot \sigma(\{1\})\}$ if and only if  $v = \{s_j\}^n $ where $s_j= \left\{\begin{array}{ccc}0 & \mathrm{if} & j \neq k \\ 1 & \mathrm{if} & j = k \end{array}\right.$, or simply $s_k$.
\end{theorem}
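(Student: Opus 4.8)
The plan is to establish both directions by reducing to the machinery already built: the isomorphism $f: S \to L$ (Theorem 4) and the fact that the state $s_j$ equals $1$ precisely when the number of marching students dividing $j$ is odd (used in the proof of Theorem 4). So the whole statement will follow once I count, for each locker $j$, the parity of $|\{i \in k\cdot\sigma(\{1\}) : i \mid j,\ i \le n\}|$ and show it is odd iff $j = k$.

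First I would dispose of the trivial observations: if $k \nmid j$ then no multiple of $k$ divides $j$, so no $i \in k\cdot\sigma(\{1\})$ divides $j$, the count is $0$ (even), and $s_j = 0$ — consistent with the claim since such $j \ne k$. So assume $k \mid j$, write $j = k\ell$, and note $i \in k\cdot\sigma(\{1\})$ with $i \mid j$ corresponds to $i = k m$ with $m$ squarefree and $m \mid \ell$. Hence the count of marching divisors of $j$ equals the number of squarefree divisors of $\ell$, which is $2^{\omega(\ell)}$ where $\omega(\ell)$ is the number of distinct primes of $\ell$ — here the partition of $\xi(P)$ into the $\phi(z)$ and the bijection $h$ between squarefree numbers and finite prime-subsets, highlighted just before the theorem, is exactly the bookkeeping device: squarefree divisors of $\ell$ are in bijection with subsets of $h(\mathrm{rad}(\ell))$. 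Now $2^{\omega(\ell)}$ is odd iff $\omega(\ell) = 0$, i.e. $\ell = 1$, i.e. $j = k$. This handles the range issue too: when $j = k$ the only marching divisor is $k$ itself (one element, odd), and for $j = k\ell$ with $\ell > 1$ all $2^{\omega(\ell)}$ of these divisors are $\le j \le n$, so truncation at $n$ changes nothing.

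This gives the forward direction: $u = \{a_i : i \in k\cdot\sigma(\{1\})\}$ forces $v = s_k$. For the converse, I would invoke Corollary~1 / the bijectivity of $f$: since $f$ is a bijection and we have exhibited one $u$ mapping to $v = s_k$, that $u$ is the unique preimage, so $v = s_k$ forces $u = \{a_i : i \in k\cdot\sigma(\{1\})\}$. One should also remark that $k\cdot\sigma(\{1\})$ must be intersected with $\{1,\dots,n\}$ for $u$ to be an honest subset of $A$, but by Lemma~1 the extra students are immaterial, so this is only cosmetic.

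The main obstacle is the parity count $2^{\omega(\ell)}$ — specifically making rigorous that the marching divisors of $k\ell$ are exactly $\{km : m \text{ squarefree}, m\mid\ell\}$ and that there are $2^{\omega(\ell)}$ of them. Everything else is a direct appeal to the already-proved isomorphism and bijectivity. If one wants to avoid the explicit count, an alternative is an inductive/telescoping argument on $\ell$ in the poset of squarefree divisors, but the $2^{\omega(\ell)}$ formula is cleanest and dovetails with the $\phi(z)$ partition the author has just set up.
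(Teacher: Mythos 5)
Your proof is correct, and its forward direction rests on the same core fact as the paper's: the number of marching students who touch a given locker is a power of two (the count of squarefree divisors), hence even unless that power is $2^0=1$. You execute this more carefully than the paper does --- the paper reduces to $k=1$ ``WLOG'' and then speaks loosely of $d(i)$ being even for the marching indices $i$, whereas you substitute $j=k\ell$, observe that $km\mid k\ell$ iff $m\mid\ell$, and count the squarefree divisors of $\ell$ directly, which is the cleaner and fully general argument; you also correctly dispose of the $k\nmid j$ case and of the truncation at $n$. Where you genuinely diverge is the converse: the paper rebuilds $u$ by an inductive, greedy construction (locker $b_k$ forces $a_k\in u$, then the lockers $b_{pk}$ force $a_{pk}\in u$, then $a_{p_1p_2k}$, and so on through products of distinct primes), while you simply invoke the bijectivity of $f$ (Corollary 1) to conclude that the preimage of $s_k$ exhibited in the forward direction is the unique one. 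Your route is shorter and leans on machinery already proved; the paper's construction has the minor virtue of showing how $u$ is discovered rather than merely verified. One small caution: the symbol $\omega$ you use for the number of distinct prime factors collides with the paper's use of $\omega$ for the number of open lockers, so a different letter would be advisable.
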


\begin{proof}
WLOG, we can let $k=1$, this implies that $a_1 \in u$. All lockers touched by student $a_1$ will be closed except locker $b_1$, the first locker. This is true since every $i$ of $a_i \in u $ is $  \prod_{i = 1 }^ k {p_i}$. By Theorem $5$ and Corollary $2$, $d(i)$ is even which means closed lockers.In fact, $d(i)$ is a power of 2. Conversely, we let $v={s_k}$. Clearly, all  $a_x$ where $x < k $ for all $x\in \mathbb{N}$ are not elements of $u$ and $a_k \in u$. Now all lockers $b_j$ where $k \mid j$ will be opened. But we need to close all lockers $b_j$ where $k \mid j$ except locker $b_k$. Thus,  $a_{pk} \in u$ where $p \in P$ and $a_{p_1p_2k} \in u$ where $p_1,p_2 \in P$. Inductively, we have $ u = {a_k} \cup a_{pk} \cup \cdots =   \{ a_i \mid i \in k \cdot M \} $ where $$M = \bigcup_{z=0}^\infty \Phi(z)=\sigma(\{1\})$$  and where $ \Phi(z) = \{ h^{-1}(R) \mid R \subset \xi(P) \wedge |R| = z\}$. And thus, $u = \{ a_i \mid i \in k \cdot \sigma(\{1\}) \}$.    
\end{proof}

From now on, we consider the notation $s_k= f(u_k)$. Let us state the two main algorithms.

\begin{algorithm}
\textbf{\emph{Finding $v \in L$ given $u \in S$}}.\\
$u := n$\\ 
\textbf{while} $n \neq \emptyset$\\
\textbf{begin}\\ 
$ min(n) := a_k$\\
$ n := n \rightarrow u_k $\\
\textbf{print:} $k$\\
\textbf{end}
\end{algorithm}

\begin{proof}
Given $u \in S$ and let $u=u^1$. Find the $ min(u^1) = a_{k_1}$.Then, $u^1 = u_{k_1} \rightarrow ( u^1 \rightarrow u_{k_1})$ and let $u^2 =  u^1 \rightarrow u_{k_1}$.	Find $min(u^2) = a_{k_2}$. Then  $u^2 = u_{k_2} \rightarrow ( u^2 \rightarrow u_{k_2})$. In general, we can have $u^i = u_{k_i} \rightarrow ( u^i \rightarrow u_{k_i})$. Since $k_i < k_{i+1}$ and if $k_i > \frac{n}{2}$,then $u^i = \{a_{k_i}\}$ and halt since all $a_{k_i}$ where $k_i > \frac{n}{2}$ touches only locker $b_i$. This implies the last step of the algorithm becomes $ u^i \rightarrow u_{k_i} = \emptyset$ where $i$ is $\omega$ and $v = \{s_j\}^n $ where $s_j=\left\{\begin{array}{ccc}0 & \mathrm{if} & j \neq k_i \\ 1 & \mathrm{if} &  j = k_i \end{array}\right.$ by theorem 10.
\end{proof}

\begin{remark}
In the proof above, I used Theorem 2 as my basis. Alternatively, we can use set theory as our tool to clarify the above proof. In Remark 1, it is noted that the structure of $\left\langle S, \rightarrow \right\rangle$ is the same as the structure of $\left\langle \wp (N), \oplus  \right\rangle$, where $\wp(N)$ is the power set of $N$ and $\oplus$ is the symmetric difference operation. Since structure is preserved, then working on the abelian group $\left\langle \wp (N), \oplus  \right\rangle$ is same with working on $\left\langle S, \rightarrow \right\rangle$. Also, the "$u^2$" above means second iteration unlike the "$u^2$" in Theorem 2 which means $u \rightarrow u = \emptyset$.
\end{remark}

\begin{algorithm}
\textbf{\emph{Finding $u \in S$  given $v \in L$}}.\\
$u := \emptyset$\\
$m := \emptyset$\\
\textbf{for} $ j=1$ to $length(v)$\\
\{\textbf{if} $s_j = 1$,$m := u \rightarrow u_j$\\
 \textbf{else} $m$;\\
\}	
\end{algorithm}

The above algorithms seem simple but each part requires some algorithms too. Now, it may be noted that the definitional algorithm can be use instead. Yes, it can be used but the algorithms above comes natural with each other. Thus, the algorithms provided here are all-in-one package. Now let us have the following algorithms for each part.

\begin{algorithm}
\textbf{\emph{Finding Minimum  $a_k$ in $u \in S$}}.\\
\textbf{Input: }$a_j \in u$ \textbf{and }$|u| = n$\\ 
$ min(u) := a_{j_1}$\\
\textbf{for} $ i:= 2$ \textbf{to} $ n $\\
\textbf{if} $ min > a_{j_i}$ \textbf{then} $min := a_{j_i}$\\
\textbf{Output: } $min(u) $
\end{algorithm}

\begin{algorithm}
\textbf{\emph{Generating $u_k$}}.\\
\textbf{Input: }$\sigma(\{1\})$ \textbf{and }$|u| = n$\\ 
\textbf{for} $ i:= 1$ \textbf{to} $n$ \\
\textbf{while} $\sigma(\{1\})\cdot i \leq n$\\
 $\sigma(\{1\}) \cdot i \in u_i$ \\
\textbf{Output:} $u_1$ to $u_n$
\end{algorithm}

The other algorithms not stated above like \emph{set-symmetric difference algorithm} can be viewed at \emph{http://www.cplusplus.com/reference/algorithm/set\_symmetric\_difference/} and the algorithm generating for all element in $\sigma(\{1\})$ less than $n$ can be seen in the work of \cite{Atkin}. Now that we have the algorithms, there is a way, in a finite steps but may not be the most efficient algorithm, in solving the locker problem. Thus, concluding that the locker problem is now settled. This conclusion is made possible since the current algorithms and researchers I have seen about the problem are more of case to case basis not the one I made which encompasses all possible cases.


\begin{thebibliography}{3}
\bibitem{Sample}
Lindsey,Sample, The Locker Problem Revisited Expository Paper, {\it Masters' Thesis @ University of Nebraska-Lincoln}  ( July 2009).
\bibitem{Torrence}
B. Torrence and S. Wagon, The locker problem,  {\it Crux Mathematicorum} {\bf (33)}(4) ( May 2007), 232-236.
\bibitem{Atkin}
A.O.L. Atkin, D.J. Bernstein, Prime sieves using binary quadratic forms,{\it Mathematics of Computation}{\bf (73)}(246) ( Dec 19 2003),1023-1030.

\end{thebibliography}
\end{document}